\documentclass[a4paper,reqno,12pt]{amsart}
\usepackage{t1enc}
\usepackage[margin=1.0in]{geometry}
\usepackage{ifthen}
\usepackage{graphicx}
\usepackage{xcolor}

\newcommand{\R}{\mathbf{R}}

\newcommand{\pr}{\mathbf{P}}
\newcommand{\ex}{\mathbf{E}}

\theoremstyle{plain}
\newtheorem{theorem}{Theorem}
\newtheorem{lemma}{Lemma}
\newtheorem{corollary}{Corollary}
\newtheorem{proposition}{Proposition}

\theoremstyle{definition}

\theoremstyle{remark}

\newcommand{\formula}[2][nolabel]
{\ifthenelse{\equal{#1}{nolabel}}
 {\begin{align*} #2 \end{align*}}
 {\ifthenelse{\equal{#1}{}}
  {\begin{align} #2 \end{align}}
  {\begin{align} \label{#1} #2 \end{align}}
 }
}

%
%                            ---------- o ----------
%

\sloppy
\numberwithin{equation}{section}

\begin{document}

%
%                            ---------- o ----------
%

\title[Fourier-Bessel heat kernel estimates]{Fourier-Bessel heat kernel estimates}
\author{Jacek Ma{\l}ecki, Grzegorz Serafin and Tomasz Zorawik}
\address{Jacek Ma{\l}ecki, Grzegorz Serafin \\ Faculty of Fundamental Problems of Technology \\ Department of Mathematics\\ Wroc{\l}aw University of Technology \\ ul.
Wybrze{\.z}e Wyspia{\'n}\-skiego 27 \\ 50-370 Wroc{\l}aw,
Poland
}
\email{jacek.malecki@pwr.edu.pl, grzegorz.serafin@pwr.edu.pl}
\address{Tomasz Zorawik \\ Faculty of Fundamental Problems of Technology \\ Department of Mathematics\\ Wroc{\l}aw University of Technology \\ ul.
Wybrze{\.z}e Wyspia{\'n}\-skiego 27 \\ 50-370 Wroc{\l}aw,
Poland
}
\email{tomasz.zorawik@pwr.edu.pl}

\keywords{Fourier-Bessel heat kernel, sharp estimate, Bessel process, transition density}
\subjclass[2010]{42C05, 60J60, 35K08}

\thanks{The project was funded by the National Science Centre grant no. 2013/11/D/ST1/02622.}

\begin{abstract} 
We provide sharp two-sided estimates of the Fourier-Bessel heat kernel and we give sharp two-sided estimates of the transition probability density for the Bessel process in $(0,1)$ killed at $1$ and killed or reflected at $0$.
\end{abstract}

\maketitle
\section{Introduction}
\label{sec:introduction}
We consider the Fourier-Bessel heat kernel, which is represented in terms of the Bessel functions of the first kind $J_\nu(z)$ and its successive $n$-th positive zeros $\lambda_{n,\nu}$ in the following way
\formula[eq:FB:def]{
  G_t^\nu(x,y) = 2(xy)^{-\nu}\sum_{n=1}^\infty \exp\left(-\lambda_{n,\nu}^2 t\right)\frac{J_\nu(\lambda_{n,\nu}x)J_\nu(\lambda_{n,\nu}y)}{|J_{\nu+1}(\lambda_{n,\nu})|^2}\/,\quad x,y\in (0,1)\/,\quad t>0\/,
}
where $\nu>-1$. The main results of the paper are the following sharp two-sided estimates of $G_t^\nu(x,y)$ given in
\begin{theorem}
\label{thm:main}
For every $\nu>-1$ we have
\formula[eq:mainresult]{
   G_t^\nu(x,y) \stackrel{\nu}{\approx} \frac{(1+t)^{\nu+2}}{(t+xy)^{\nu+1/2}}\left(1\wedge\frac{(1-x)(1-y)}{t}\right)\,\frac{1}{\sqrt{t}}\exp\left(-\frac{|x-y|^2}{4t}-\lambda_{1,\nu}^2t\right)\/,
}
whenever $x,y\in(0,1)$ and $t>0$\/.
\end{theorem}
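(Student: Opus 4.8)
The plan is to obtain the estimate in three regimes of the time variable, matching the series representation \eqref{eq:FB:def} to different classical objects. First, for \emph{large $t$} (say $t\geq 1$), the $n=1$ term dominates the series: one shows that the sum of the remaining terms is controlled by a constant multiple of the first, using the known asymptotics $\lambda_{n,\nu}\approx n$ and the bounds $|J_\nu(\lambda_{n,\nu}x)|\lesssim (\lambda_{n,\nu}x)^\nu\wedge(\lambda_{n,\nu}x)^{-1/2}$ together with $|J_{\nu+1}(\lambda_{n,\nu})|^{-2}\approx \lambda_{n,\nu}$. Since $\exp(-\lambda_{n,\nu}^2t)\leq \exp(-\lambda_{1,\nu}^2t)\exp(-c n^2(t-1))$ for $n\geq 2$, the tail is geometrically small and one is left with $G_t^\nu(x,y)\approx (xy)^{-\nu}J_\nu(\lambda_{1,\nu}x)J_\nu(\lambda_{1,\nu}y)e^{-\lambda_{1,\nu}^2 t}$. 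Using $J_\nu(\lambda_{1,\nu}x)\approx x^\nu(1-x)$ on $(0,1)$ (the factor $1-x$ coming from the simple zero at $x=1$), this matches the right-hand side of \eqref{eq:mainresult} since for $t\geq 1$ one has $(1+t)^{\nu+2}(t+xy)^{-\nu-1/2}t^{-1/2}\approx t^{1}$ and $1\wedge\frac{(1-x)(1-y)}{t}\approx (1-x)(1-y)$, while $\exp(-|x-y|^2/4t)\approx 1$.

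For \emph{small $t$} the heat kernel should behave like the free Gauss--Weierstrass kernel on the line corrected by the Dirichlet conditions at the endpoints $0$ and $1$. The cleanest route is probabilistic: $G_t^\nu$ is (up to the weight $(xy)^{-\nu}$ absorbed into the transition density) the transition density of the Bessel process of index $\nu$ in $(0,1)$ killed at $1$ (and killed/reflected at $0$, depending on $\nu$). One compares this killed process with the free Bessel process on $(0,\infty)$, whose transition density $p_t^\nu(x,y)$ has the classical sharp estimate $p_t^\nu(x,y)\approx (t+xy)^{-\nu-1/2}t^{-1/2}\exp(-|x-y|^2/4t)\cdot(xy)^\nu\cdot(\text{reflection correction near }0)$. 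The Dirichlet condition at $1$ contributes the Hunt-type factor $1\wedge\frac{(1-x)(1-y)}{t}$: the upper bound follows from the strong Markov property applied at the hitting time of $1$, and the lower bound from a standard Chapman--Kolmogorov three-step argument forcing the path from $x$ to a midpoint away from $1$ and then to $y$. One must also check that the contribution of the boundary at $0$ for $-1<\nu<0$ produces exactly the $(t+xy)^{-\nu-1/2}$ behavior rather than an extra vanishing factor, which is where the precise index-dependent boundary analysis of the Bessel process enters.

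The intermediate regime $t\approx 1$ is handled by a compactness/continuity argument once the two extreme regimes are established, or simply by noting that both sides of \eqref{eq:mainresult} are, for $t$ in a fixed compact interval bounded away from $0$ and $\infty$, comparable to bounded positive continuous functions of $(x,y)$ whose ratio is bounded. The main obstacle I expect is the \emph{small-time} analysis: controlling the Dirichlet correction at $x=1$ uniformly down to $t\to 0^+$ with the sharp constant structure, and simultaneously tracking the $\nu$-dependent behavior at the other endpoint $0$, requires either delicate estimates on the Bessel semigroup (Girsanov-type transformations relating indices $\nu$ and $-\nu$, Bessel bridge decompositions) or sharp uniform bounds on the Bessel function series; reconciling the oscillatory series \eqref{eq:FB:def} with the non-oscillatory target \eqref{eq:mainresult} for $t$ small is the technical heart of the proof. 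A useful auxiliary tool is the Hankel-transform/Poisson-type summation identity that rewrites \eqref{eq:FB:def} as a periodization of the free Bessel kernel, turning the problem into estimating a rapidly converging image-charge sum.
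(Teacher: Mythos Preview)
Your overall architecture matches the paper's: split into large time (first eigenfunction dominates) and small time (probabilistic comparison with the free Bessel semigroup, using the Hunt formula and absolute continuity between indices). Your large-time argument is essentially what the paper invokes as known, and your lower-bound sketch (Girsanov to $\nu=-1/2$ plus a Chapman--Kolmogorov midpoint argument) is in the same spirit as the paper's Proposition~\ref{prop:below}.

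The genuine gap is in the small-time \emph{upper} bound. You write that the factor $1\wedge\frac{(1-x)(1-y)}{t}$ ``follows from the strong Markov property applied at the hitting time of $1$''. The Hunt formula gives you only the trivial bound $p_1^{(\nu)}\leq p^{(\nu)}$, which suffices when $(1-x)(1-y)\geq t$; extracting the sharp factor $(1-x)(1-y)/t$ in the complementary regime is precisely the hard part, and the strong Markov property alone does not deliver it with the \emph{same} exponential $\exp(-|x-y|^2/4t)$ in the bound. The paper's mechanism here is not something you have identified. When both $x,y$ are bounded away from $0$ the paper does what you suggest: apply absolute continuity to reduce to the Brownian Dirichlet kernel on an interval, where sharp estimates are available. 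But when $x$ is far from $1$ while $y$ is close to $1$, Girsanov is unavailable (the functional $\int_0^t R(s)^{-2}\,ds$ is not controlled), and the paper instead \emph{mimics the reflection principle}: it writes
\[
p_1^{(\nu)}(t,x,y)=\bigl(p^{(\nu)}(t,x,y)-p^{(\nu)}(t,x,2-y)\bigr)+\bigl(p^{(\nu)}(t,x,2-y)-r_1^{(\nu)}(t,x,y)\bigr)
\]
and bounds each bracket by $\tfrac{1-y}{t}\,p^{(\nu)}(t,x,y)$ using a sharp ratio inequality for modified Bessel functions, namely $I_\nu(b)/I_\nu(a)\leq e^{b-a}(b/a)^{\nu+1}$ for $b>a>1$ (Lemma~\ref{lem:ratio:estimates}). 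This Bessel-ratio lemma is the key analytic input that makes the reflection heuristic rigorous for $\nu\neq -1/2$; without it, or some substitute, your upper bound sketch does not close.

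Your alternative suggestion of a Poisson-type summation rewriting the series as an image-charge expansion of the free Bessel kernel is a different route altogether; the paper does not use it, and making it yield \emph{sharp} two-sided bounds (rather than qualitatively sharp ones with mismatched exponential constants) would require its own work.
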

Here $\stackrel{\nu}{\approx}$ means that the ratio of the functions on the right and left-hand side is bounded from below and above by positive constants depending only on $\nu$. Since the sum in \eqref{eq:FB:def} is oscillating, this explicit representation can be only used to examine the behaviour of the kernel for large times. Indeed, it well-known that $G_t^\nu(x,y)$ behaves like the first term of the series, whenever $t\geq T_0>0$ (for every $T_0$, if we consider the upper-bounds and for some $T_0$, if we deal with the lower-bounds). However, the description of the behaviour of $G_t^\nu(x,y)$ for small times is very difficult to obtain from the above-given series representation, since the sum is highly oscillating and the cancellations between the terms matter in that case. This is a reason why we do not use \eqref{eq:FB:def} in examine the small-time behaviour, instead we explore the relation between the Fourier-Bessel heat kernel and the transition probability density  of the Bessel process with index $\nu$ reflected at $0$ and killed at $1$. This approach enables us to use probabilistic tools like, for example, the Hunt formula or the Strong Markov property, but still the purely analytic studies of the properties of the modified Bessel functions are crucial for the proofs. 

The Fourier-Bessel expansions naturally associated with the Fourier-Bessel heat kernel $G_t^\nu(x,y)$ has been studied for a long time in many different contexts, such as the study of the fundamental operators associated with the Fourier-Bessel expansions (see \cite{CiaurriRoncal:2005}, \cite{CiaurriRoncal:2010}, \cite{CiaurriStempak:2006b}, \cite{CiaurriStempak:2006a}, \cite{CiaurriStempak:2006c}) or the related Hardy spaces (\cite{DPRS:2015}) just to list a few from the latest works (see \cite{NowakRoncal:2014a} for more references). Moreover, the Fourier-Bessel expansions are successfully applicable in variety of areas outside Mathematics. The estimates of $G_t^\nu(x,y)$ has been recently studied in \cite{NowakRoncal:2014a} and \cite{NowakRoncal:2014b}, where the provided two-sided estimates of $G_t^\nu(x,y)$ were quantitatively sharp, i.e. the different constants appear in the exponential terms of the lower and upper bounds. It makes the estimates not sharp, whenever $|x-y|^2>>t$. In the estimates given in Theorem \ref{thm:main} the exponential behaviour of the kernel is described explicitly, i.e. the exponential terms in the lower and upper bounds are exactly the same. Such accurate results seem to be quite rare. Notice that even in the classical setting of Laplacian in $\R^n$, the known estimates of related Dirichlet heat kernel for smooth domains (see \cite{Zhang:2002}) are also only quantitatively sharp (see also \cite{SC:2010} and the references therein for corresponding results on manifolds). However, in the recent papers \cite{BogusMalecki:2015b} and \cite{BogusMalecki:2015a} the sharp two-sided estimates for the Dirichlet heat kernel of the half-line $(a,\infty)$ associated with the Bessel differential operator has been obtained. 

As we have previously mentioned, the result can be equivalently stated in the probabilistic context. More precisely, if we denote by $p_1^{(\nu)}(t,x,y)$ the transition probability density (with respect to the speed measure $m^{(\nu)}(dy)=y^{2\nu+1}dy$) of the Bessel process with index $\nu>-1$ killed at $1$ and reflected at $0$, then we have $p_1^{(\nu)}(2t,x,y) = G^{\nu}_{t}(x,y)$ and consequently
\begin{corollary}
For given $\nu>-1$ we have
\formula[eq:reflected:estimates]{
   p_{1}^{(\nu)}(t,x,y) \stackrel{\nu}{\approx} \frac{(1+t)^{\nu+2}}{(t+xy)^{\nu+1/2}}\left(1\wedge\frac{(1-x)(1-y)}{t}\right)\,\frac{1}{\sqrt{t}}\exp\left(-\frac{|x-y|^2}{2t}-\lambda_{1,\nu}^2t/2\right)\/,
}
whenever $x,y\in (0,1)$ and $t>0$.
\end{corollary}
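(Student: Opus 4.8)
The plan is to deduce the corollary directly from Theorem~\ref{thm:main} by means of the deterministic time change $p_1^{(\nu)}(t,x,y)=G^\nu_{t/2}(x,y)$ recorded just above the statement, so that nothing beyond elementary inequalities remains to be done. First I would recall where that identity comes from: after suitable normalisation the functions $x^{-\nu}J_\nu(\lambda_{n,\nu}x)$ appearing in \eqref{eq:FB:def} form an orthonormal basis of $L^2\big((0,1),m^{(\nu)}\big)$ with $m^{(\nu)}(dy)=y^{2\nu+1}\,dy$, and $\lambda_{n,\nu}^2$ are the corresponding eigenvalues of the operator $-\frac{d^2}{dx^2}-\frac{2\nu+1}{x}\frac{d}{dx}$ with the Dirichlet condition at $1$ and the natural boundary condition at $0$. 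Since the $m^{(\nu)}$-symmetric generator of the Bessel process with index $\nu$ killed at $1$ and reflected at $0$ equals one half of this operator, replacing $t$ by $t/2$ in the series \eqref{eq:FB:def} produces exactly $p_1^{(\nu)}(2t,x,y)=G^\nu_t(x,y)$; for the purposes of the corollary this relation may simply be taken from the text.

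Next I would substitute $t/2$ for $t$ in \eqref{eq:mainresult}. The exponential factor then becomes precisely $\exp\big(-|x-y|^2/(2t)-\lambda_{1,\nu}^2t/2\big)$, which is the one in \eqref{eq:reflected:estimates}, so it only remains to compare the subexponential factors. Concretely one has to check, for all $t>0$ and all $x,y\in(0,1)$, the four comparisons $(1+t/2)^{\nu+2}\stackrel{\nu}{\approx}(1+t)^{\nu+2}$, $(t/2+xy)^{-(\nu+1/2)}\stackrel{\nu}{\approx}(t+xy)^{-(\nu+1/2)}$, $\big(1\wedge\frac{(1-x)(1-y)}{t/2}\big)\approx\big(1\wedge\frac{(1-x)(1-y)}{t}\big)$ and $(t/2)^{-1/2}=\sqrt2\,t^{-1/2}$. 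The first two follow from the bounds $1\le\frac{1+t}{1+t/2}\le 2$ and $1\le\frac{t+xy}{t/2+xy}\le 2$ together with the fact that raising a ratio confined to $[1,2]$ to a fixed power $\alpha$ changes it only by a constant depending on $\alpha$ (here $\alpha=\nu+2>1$ in one case and $\alpha=-(\nu+1/2)$, of either sign, in the other); the third follows from $1\wedge s\le 1\wedge 2s\le 2\,(1\wedge s)$ for $s\ge 0$. Multiplying the four comparisons together turns the substituted version of \eqref{eq:mainresult} into \eqref{eq:reflected:estimates}.

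I do not anticipate any genuine obstacle, since the entire analytic content is already in Theorem~\ref{thm:main} and what remains is bookkeeping. The only point that genuinely deserves care is the time-change identity $p_1^{(\nu)}(2t,x,y)=G^\nu_t(x,y)$ itself --- that is, verifying that the spectral data underlying \eqref{eq:FB:def} is indeed that of the killed-and-reflected Bessel semigroup and that the scaling constant is exactly $2$ --- but this is classical and has already been asserted in the text preceding the corollary.
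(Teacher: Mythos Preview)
Your proposal is correct and is exactly the approach the paper takes: the corollary is presented as an immediate consequence of the identity $p_1^{(\nu)}(2t,x,y)=G^\nu_t(x,y)$ together with Theorem~\ref{thm:main}, and the paper does not even spell out the elementary factor-by-factor comparisons you wrote down. Your additional remarks about the spectral origin of the time-change identity go slightly beyond what the paper records, but they are accurate and harmless.
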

Futhermore, instead of studying the Bessel process reflected at $0$, we can impose killing condition at both ends of the interval $(0,1)$. Then we can expend the range of the index of the process to the whole real line ($\mu\in \R$) and denote by $p_{(0,1)}^{(\mu)}(t,x,y)$ the transition probability density (with respect to the speed measure $m^{(\mu)}(dy)=y^{2\mu+1}dy$) of the corresponding process, i.e. the Bessel process killed when it leaves $(0,1)$. Note that for $\mu\geq 0$ the process does not hit $0$. Consequently the condition at zero (killing or reflecting) is relevant for the considered problem in that case, which means that $p_{(0,1)}^{(\mu)}(t,x,y)$ and $p_{1}^{(\nu)}(t,x,y)$ are identical for $\mu=\nu\geq 0$. Moreover, for $\mu<0$ we can use the absolute continuity property of the Bessel process with different indices to get that for every $\mu\geq 0$ we have
\formula{
   p^{(-\mu)}_{(0,1)}(t,x,y) = (xy)^{2\mu}p^{(\mu)}_{(0,1)}(t,x,y)\/,\quad x,y\in (0,1)\/,\quad t>0\/.
}
Collecting all together we obtain
\begin{corollary}
For given $\mu\in \R$ we have
\formula[eq:killed:estimates]{
   p_{(0,1)}^{(\mu)}(t,x,y) \stackrel{\mu}{\approx} \frac{(1+t)^{|\mu|+2}}{(t+xy)^{|\mu|+1/2}}\left[1\wedge\frac{(1-x)(1-y)}{t}\right]\,\frac{1\wedge (xy)^{-2\mu}}{\sqrt{t}}\exp\left(-\frac{|x-y|^2}{2t}-\frac{\lambda_{1,\nu}^2t}{2}\right)
}
whenever $x,y\in (0,1)$ and $t>0$.
\end{corollary}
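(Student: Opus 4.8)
The plan is to reduce the estimate \eqref{eq:killed:estimates} to the estimate \eqref{eq:reflected:estimates} for the process reflected at $0$, which in turn follows from Theorem~\ref{thm:main} via the identity $p_1^{(\nu)}(2t,x,y)=G_t^\nu(x,y)$. The two structural inputs are exactly the ones recalled just before the statement: for $\mu\geq 0$ the Bessel process of index $\mu$ does not reach $0$, so the behaviour imposed at $0$ is immaterial and $p_{(0,1)}^{(\mu)}(t,x,y)=p_1^{(\mu)}(t,x,y)$; and for nonnegative $\tilde\mu$ the Doob-type absolute continuity relation $p_{(0,1)}^{(-\tilde\mu)}(t,x,y)=(xy)^{2\tilde\mu}p_{(0,1)}^{(\tilde\mu)}(t,x,y)$ holds for all $x,y\in(0,1)$, $t>0$, which pushes the whole problem onto nonnegative indices.

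First, for $\mu\geq 0$ I would simply apply \eqref{eq:reflected:estimates} with $\nu=\mu$, using $p_{(0,1)}^{(\mu)}=p_1^{(\mu)}$. Here $|\mu|=\mu$, and since $xy\in(0,1)$ while $-2\mu\leq 0$ one has $(xy)^{-2\mu}\geq 1$, hence $1\wedge(xy)^{-2\mu}=1$; thus the right-hand side of \eqref{eq:reflected:estimates} is literally the right-hand side of \eqref{eq:killed:estimates} in this case, with $\nu=\mu=|\mu|$.

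Next, for $\mu<0$ put $\tilde\mu=|\mu|>0$. Combining $p_{(0,1)}^{(\mu)}=p_{(0,1)}^{(-\tilde\mu)}=(xy)^{2\tilde\mu}p_{(0,1)}^{(\tilde\mu)}=(xy)^{-2\mu}\,p_{1}^{(\tilde\mu)}$ with \eqref{eq:reflected:estimates} applied at the nonnegative index $\tilde\mu=|\mu|$ yields
\formula{
 p_{(0,1)}^{(\mu)}(t,x,y)\stackrel{\mu}{\approx}(xy)^{-2\mu}\,\frac{(1+t)^{|\mu|+2}}{(t+xy)^{|\mu|+1/2}}\left(1\wedge\frac{(1-x)(1-y)}{t}\right)\frac{1}{\sqrt{t}}\exp\left(-\frac{|x-y|^2}{2t}-\frac{\lambda_{1,|\mu|}^2 t}{2}\right),
}
and the implied constants depend only on $\tilde\mu=|\mu|$, i.e. only on $\mu$. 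Since now $-2\mu>0$ and $xy\in(0,1)$ force $(xy)^{-2\mu}<1$, we have $(xy)^{-2\mu}=1\wedge(xy)^{-2\mu}$, so this is exactly \eqref{eq:killed:estimates} (read with $\nu=|\mu|$). Putting the two cases together gives the claim for all $\mu\in\R$.

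I do not expect a genuine obstacle once Theorem~\ref{thm:main} (equivalently \eqref{eq:reflected:estimates}) is available; the corollary is essentially bookkeeping plus the two structural facts. The only point that deserves a little care is the exponential rate in the regime $\mu<0$: multiplying the killed transition density by $(xy)^{2\mu}$ does not change the spectrum of the killed semigroup, because $x\mapsto x^{2\mu}\ph_n(x)$ is the $n$-th eigenfunction of the index-$(-\mu)$ problem and is orthonormal with respect to $m^{(-\mu)}(dx)=x^{-2\mu+1}\,dx$, with the same eigenvalue $\lambda_{n,\mu}^2$. Hence the bottom of the spectrum is $\lambda_{1,|\mu|}^2$ and the exponential factor in \eqref{eq:killed:estimates} is $\exp\bigl(-|x-y|^2/(2t)-\lambda_{1,|\mu|}^2 t/2\bigr)$, consistent with the statement; one should also record that the absolute continuity relation is compatible with killing upon leaving $(0,1)$ precisely because $h(x)=x^{2\mu}$ is smooth and bounded away from $0$ and $\infty$ on compact subsets of $(0,1)$.
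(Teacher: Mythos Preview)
Your proposal is correct and follows precisely the route sketched in the paper: for $\mu\geq 0$ invoke $p_{(0,1)}^{(\mu)}=p_1^{(\mu)}$ and read off \eqref{eq:reflected:estimates}, while for $\mu<0$ apply the absolute continuity identity $p_{(0,1)}^{(-|\mu|)}=(xy)^{2|\mu|}p_{(0,1)}^{(|\mu|)}$ to reduce to the nonnegative case; the observation $1\wedge(xy)^{-2\mu}=1$ or $=(xy)^{-2\mu}$ according to the sign of $\mu$ is exactly what makes the two pieces match the unified formula. Your closing remark clarifying that the exponential rate should be read as $\lambda_{1,|\mu|}^2$ is a useful gloss on what is a notational slip in the displayed statement.
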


The paper is organized as follows. In Preliminaries we collect some basic properties of the modified Bessel function of the first kind  $I_\nu(z)$ together with the estimates of the ratio of the form $I_\nu(y)/I_\nu(x)$. Then we introduce the basic notation together with some properties of the Bessel processes, which are used in the sequel. In Section \ref{sec:Proof} we provide the proof of Theorem \ref{thm:main}, which is divided into two parts. The first one relates to the lower bounds and the estimates in that case are given in Proposition \ref{prop:below}. The upper bounds are proved in Proposition \ref{prop:above}.

\section{Preliminaries}
\label{sec:preliminaries}
\subsection{Modified Bessel functions of the first kind} The modified Bessel function of the first kind is defined by 
\formula{
  I_\nu(z) = \sum_{k=0}^\infty \left(\frac{z}{2}\right)^{\nu+2k}\frac{1}{k!\Gamma(k+\nu+1)}\/,\quad z>0\/,\quad \nu>-1\/.
}
The above-given definition immediately implies that for every $\nu>-1$ we have
\formula[eq:I:asymp:zero]{
   I_\nu(z) \sim \left(\frac{z}{2}\right)^\nu \frac{1}{\Gamma(\nu+1)}\/,\quad z\to 0^+\/.
}
Moreover, the behaviour at infinity is described by
\formula[eq:I:asymp:infty]{
   I_\nu(z) \sim \frac{e^z}{\sqrt{2\pi z}}\/,\quad z\to \infty\/.
}
Here $\sim$ means that the corresponding limit of the ratio of the both functions is $1$. Finally, we will need the upper bounds of the ratio $I_\nu(y)/I_{\nu}(x)$, which can be found in \cite{Laforgia:1991}
\formula[eq:Laforgia]{
   \frac{I_\nu(y)}{I_\nu(x)}\leq e^{y-x}\left(\frac{y}{x}\right)^{\nu}\quad y\geq x>0\/,\quad \nu\geq-1/2\/.
}
However, the above-given result is not true for $\nu\in (-1,-1/2)$. Since we will need such kind of estimates only for large $x,y$, we introduce the following result valid for every $\nu>-1$ and large $x$ and $y$.  Note that the exponential term in the upper-bounds is the same as in (\ref{eq:Laforgia}), but the factor $(y/x)^{\nu}$ is here replaced by $(y/x)^{\nu+1}$.
\begin{lemma}
\label{lem:ratio:estimates} For every $y>x>1$ and $\nu>-1$ we have
\formula[eq:II:inequality]{
  \frac{I_\nu(y)}{I_\nu(x)}\leq e^{y-x}\left(\frac{y}{x}\right)^{\nu+1}\/.
}
\end{lemma}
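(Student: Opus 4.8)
The goal is to bound $I_\nu(y)/I_\nu(x)$ from above by $e^{y-x}(y/x)^{\nu+1}$ for $y>x>1$, where the Laforgia bound \eqref{eq:Laforgia} is only available for $\nu\ge-1/2$. The natural first move is to reduce the remaining range $\nu\in(-1,-1/2)$ to a case already covered. I would use the classical contiguous/recurrence relation $I_{\nu-1}(z)-I_{\nu+1}(z)=\tfrac{2\nu}{z}I_\nu(z)$, or more conveniently the derivative identity $\frac{d}{dz}\bigl(z^{-\nu}I_\nu(z)\bigr)=z^{-\nu}I_{\nu+1}(z)$, which links the index $\nu$ to the index $\nu+1$. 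Since $\nu\in(-1,-1/2)$ forces $\nu+1\in(0,1/2)$, the shifted index lies in the Laforgia range $[-1/2,\infty)$, so \eqref{eq:Laforgia} applies to $I_{\nu+1}$.

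Concretely, I would write $z^{-\nu}I_\nu(z)=\frac{1}{\Gamma(\nu+1)2^\nu}+\int_0^z s^{-\nu}I_{\nu+1}(s)\,ds$ using \eqref{eq:I:asymp:zero} to fix the constant of integration (note $s^{-\nu}I_{\nu+1}(s)\sim s/(2^{\nu+1}\Gamma(\nu+2))$ near $0$, so the integral converges). Then
\formula{
  \frac{I_\nu(y)}{I_\nu(x)} = \left(\frac{y}{x}\right)^{\nu}\cdot\frac{\,c_\nu+\int_0^y s^{-\nu}I_{\nu+1}(s)\,ds\,}{\,c_\nu+\int_0^x s^{-\nu}I_{\nu+1}(s)\,ds\,}\/,\qquad c_\nu=\frac{1}{\Gamma(\nu+1)2^\nu}\/,
}
and since everything in sight is positive, it suffices to control the ratio of the two integrals (plus constant) by $e^{y-x}(y/x)$. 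For this I would apply \eqref{eq:Laforgia} with index $\nu+1$ to write $I_{\nu+1}(s)\le I_{\nu+1}(x)e^{s-x}(s/x)^{\nu+1}$ for $s\ge x$, bounding the "extra" part $\int_x^y s^{-\nu}I_{\nu+1}(s)\,ds$ of the numerator. One then wants the elementary estimate $\int_x^y e^{s-x}(s/x)\,ds \le \bigl(e^{y-x}-1\bigr)(y/x)$, or a slightly cruder version of it, combined with the crude bound $c_\nu+\int_0^x s^{-\nu}I_{\nu+1}(s)\,ds = x^{-\nu}I_\nu(x) \ge $ (something comparable to $x^{-\nu-1}I_{\nu+1}(x)$ via the same derivative identity run the other way, or simply the monotonicity of $s\mapsto s^{-\nu}I_{\nu+1}(s)$) so that the increment is dominated by a multiple of the base value.

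The main obstacle I anticipate is getting the \emph{clean} exponent $(y/x)^{\nu+1}$ — i.e. exactly one extra power — rather than $(y/x)^{\nu+2}$ or a constant times the desired bound, which would be easy but insufficient as stated. This forces one to be careful that the $(y/x)^{\nu}$ prefactor from the identity absorbs perfectly and the integral-ratio argument contributes exactly one further factor $y/x$ with constant $1$. The cheapest route is probably to prove directly, for every $\nu>-1$ (not splitting into cases at all), that $g_\nu(z):=z^{-\nu-1}I_\nu(z)$ satisfies a differential inequality of the form $g_\nu'(z)/g_\nu(z)\le 1 + \tfrac{\text{const}}{z}\cdot(\cdots)$ whose integration from $x$ to $y$ over $z>1$ yields $g_\nu(y)/g_\nu(x)\le e^{y-x}$, equivalently $I_\nu(y)/I_\nu(x)\le e^{y-x}(y/x)^{\nu+1}$; the condition $x>1$ is exactly what is needed to kill the lower-order term. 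So the real work is locating the sharp monotonicity statement: showing $z\mapsto z^{-\nu-1}e^{-z}I_\nu(z)$ is decreasing on $(1,\infty)$, which via $\frac{d}{dz}(z^{-\nu}I_\nu)=z^{-\nu}I_{\nu+1}$ and the known ratio $I_{\nu+1}(z)/I_\nu(z)<1$ (valid for $\nu>-1$, e.g. from the continued-fraction/Turán-type inequalities) reduces to checking $I_{\nu+1}(z)/I_\nu(z) \le 1 - (\nu+1)/z$ for $z>1$ — a one-variable Bessel-ratio inequality that should follow from standard bounds on $I_{\nu+1}/I_\nu$ (e.g. $I_{\nu+1}(z)/I_\nu(z)\le z/(\nu+1+\sqrt{(\nu+1)^2+z^2})$, the Amos-type lower-half bound), and that is where I would expect to spend most of the effort.
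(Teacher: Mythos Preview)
Your second approach---proving that $z\mapsto z^{-\nu-1}e^{-z}I_\nu(z)$ is nonincreasing on $(1,\infty)$---is exactly the paper's route, and the first (integral) approach is indeed best abandoned for the reason you identify. However, the reduction step contains an algebra slip. From $I_\nu'(z)=\tfrac{\nu}{z}I_\nu(z)+I_{\nu+1}(z)$ one computes
\formula{
\frac{d}{dz}\log\bigl(z^{-\nu-1}e^{-z}I_\nu(z)\bigr)=-\frac{\nu+1}{z}-1+\frac{\nu}{z}+\frac{I_{\nu+1}(z)}{I_\nu(z)}=-\frac{1}{z}-1+\frac{I_{\nu+1}(z)}{I_\nu(z)}\/,
}
so the monotonicity is equivalent to $I_{\nu+1}(z)/I_\nu(z)\le 1+1/z$, \emph{not} to $\le 1-(\nu+1)/z$. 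The inequality you wrote is in fact false for $\nu\in(-1,-\tfrac12)$: the asymptotics give $I_{\nu+1}(z)/I_\nu(z)=1-(\nu+\tfrac12)/z+O(z^{-2})$, which exceeds $1$ for large $z$ in that range, hence also exceeds $1-(\nu+1)/z<1$. For the same reason, the parenthetical claim that $I_{\nu+1}/I_\nu<1$ for all $\nu>-1$ is incorrect.

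With the correct target $I_{\nu+1}(z)/I_\nu(z)\le 1+1/z$ for $z>1$, the paper finishes via the Nasell inequality $I_{\nu+1}(z)/I_\nu(z)<z/(z+\nu+\tfrac12)$: for $\nu\in(-1,-\tfrac12)$ this yields $<z/(z-\tfrac12)\le 1+1/z$ whenever $z\ge 1$, and integrating $I_\nu'/I_\nu\le (\nu+1)/z+1$ from $x$ to $y$ gives \eqref{eq:II:inequality}. Note also that the Amos-type expression you quote, $z/\bigl(\nu+1+\sqrt{(\nu+1)^2+z^2}\bigr)$, is the \emph{lower} bound for $I_{\nu+1}/I_\nu$ and so points the wrong way; one needs the upper Amos bound or, more simply, the Nasell bound the paper uses.
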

\begin{proof}
  As we have mentioned, it is enough to consider $\nu\in (-1,-1/2)$ and the proof in this case will be a slight modification of that given in \cite{Laforgia:1991}. For every $z>0$ we have (see \cite{Nasell:1978})
	\formula{
	   \frac{I_{\nu+1}(z)}{I_\nu(z)}<\frac{z}{z+\nu+1/2}\/.
	}
	Thus, for every $z>1$, we can write
	\formula{
	  \frac{I_{\nu+1}(z)}{I_\nu(z)}<\frac{z}{z-1/2}\leq 1+\frac1z\/.
	}
	Consequently, the recurrent relation for the modified Bessel function implies
	\formula{
	   zI_\nu'(z) = \nu I_{\nu}(z)+zI_{\nu+1}(z)\leq (\nu+1)I_\nu(z)+zI_{\nu}(z)\/.
	}
	Dividing both sided of the above-given inequality by $zI_\nu(z)$ and integrating the obtained relation lead to 
	\formula{
	   \int_x^y \frac{I_\nu'(z)}{I_\nu(z)}\,dz\leq (\nu+1)\int_x^y \dfrac{dz}{z}+y-x\/,
	}
	whenever $y>x\geq 1$. This gives
	\formula{
	   \frac{I_\nu(y)}{I_\nu(x)}\leq \left(\frac{y}{x}\right)^{\nu+1}e^{y-x}\/,
	}
	which ends the proof.
	\end{proof}
\subsection{Bessel processes}
We write $\pr^{(\nu)}_x$ and $\ex^{(\nu)}_x$ for the probability law and the expected value of
a Bessel process with an index $\nu\in \R$ on the canonical path space with starting point $x\geq 0$. The filtration of the coordinate process $R(t)$ is denoted by $\mathcal{F}_t = \sigma\left\{R(s):s\leq t\right\}$. The transition density function (with respect to the speed measure $m^{(\nu)}(dy)=y^{2\nu+1}dy$) of the process (with reflecting condition impose on $0$, when $\nu\in(-1,0)$) is given in term of the modified Bessel function in the following way
\formula[eq:tdf:formula]{
   p^{(\nu)}(t,x,y) = \frac{(xy)^{-\nu}}{t}\exp\left(-\frac{x^2+y^2}{2t}\right)I_{\nu}\left(\frac{xy}{t}\right)\/,\quad x,y>0\/,t>0\/.
  }
	and
	\formula[eq:tdf:formula:zero]{
	  p^{(\nu)}(t,0,y) = \frac{1}{2^{\nu}t^{\nu+1}\Gamma(\nu+1)} \exp\left(-\frac{y^2}{2t}\right)\/,\quad y,t>0\/.
	}
	Taking into account the asymptotic behavior of $I_\nu(z)$ at zero \eqref{eq:I:asymp:zero} and at infinity \eqref{eq:I:asymp:zero} we obtain
	\formula[eq:tdf:estimate]{
	   p^{(\nu)}(t,x,y) \stackrel{\nu}{\approx} \frac{1}{(xy+t)^{\nu+1/2}}\,\frac{1}{\sqrt{t}}\exp\left(-\frac{(x-y)^2}{2t}\right)\/,
	}
	whenever $x,y>0$ and $t>0$.
	
Let us denote the first hitting time of a given point $a\geq 0$ by
	\formula{
	  T_a = \inf\{t>0: R(t)=a\}
	}
	and we write $q_{x,a}^{(\nu)}(ds)$ for the associated probability distribution with respect to $\pr_x^{(\nu)}$.
Moreover, we introduce the first exit time from the interval $(a,b)$ 
\formula[eq:Tab:dfn]{
   T_{(a,b)} = \inf\{t>0: R(t)\notin (a,b)\}\/, \quad 0\leq a<b \/.
}
The laws of Bessel processes with different indices are absolutely continuous and the corresponding Radon-Nikodym derivative is described by
\formula[ac:formula]{
\left.\frac{d\pr^{(\mu)}_x}{d\pr^{(\nu)}_x}\right|_{\mathcal{F}_t}=\left(\frac{R(t)}{x}\right)^{\mu-\nu}\exp\left(-\frac{\mu^{2}-\nu^2}{2}\int_{0}^{t}\frac{ds}{R^{2}(s)}\right)\/,
}
where $x>0$, $\mu,\nu\in\R$ and the above given formula holds $\pr^{(\nu)}_x$-a.s. on $\{T_0>t\}$.

The transition probability density function $p_1^{(\nu)}(t,x,y)$ of the process killed at $1$ can be expressed by the Hunt formula in the following way
\formula[eq:Hunt]{
   p_1^{(\nu)}(t,x,y) = p^{(\nu)}(t,x,y)-r_1^{(\nu)}(t,x,y)\/,
}
where 
\formula{
r^{(\nu)}_1(t,x,y) &= \ex^{(\nu)}_x[t<T_{1};p^{(\nu)}(t-T_1,R(T_{1}),y)]=\int_0^t p^{(\nu)}(t-s,1,y)\,q_{x,1}^{(\nu)}(ds)\/.
}
The last equality follows from the continuity of the path, i.e. the fact that $R(T_1)=1$ $P_x^{(\nu)}$-a.s..

Finally, we denote by $p_{(a,b)}^{(\mu)}(t,x,y)$ the transition probability density function of the process killed, when it leaves the interval $(a,b)$, for given $0\leq a<b$, i.e.
\formula{
   p_{(a,b)}^{(\mu)}(t,x,y) = \frac{\ex_x^{(\mu)}[t<T_{(a,b)};R(t)\in dy]}{m^{(\mu)}(dy)}\/,\quad x\in (a,b)\/,\quad t>0\/.
}
We will denote the index of the process by $\mu$, ($\mu\in\R$), when we deal with the process killed at $0$ (which is a case when $a=0$ above) to distinguish this situation from the case when $0$ is reflecting. Since the considered Bessel process with index $-1/2$ can be represented as the norm of one-dimensional Brownian motion, $p_{(0,1)}^{(-1/2)}(t,x,y)$ coincides with the corresponding object for Brownian motion on the real line, i.e. we have 
\formula[eq:-12:formula]{
  p_{(0,1)}^{(-1/2)}(t,x,y) = \frac{1}{\sqrt{2\pi t}}\sum_{n=-\infty}^\infty \left[\exp\left(-\frac{(x-y+2n)^2}{2t}\right)-\exp\left(-\frac{(x-y-2n)^2}{2t}\right)\right]\/.
 }

Moreover, the sharp two-sided estimates of $p_{(0,1)}^{({-1/2})}(t,x,y)$ are of the form (see Pyc, Serafin, Zak)
\formula[eq:-12:estimate]{
  p_{(0,1)}^{(-1/2)}(t,x,y) \approx \left(1\wedge\frac{xy}{t}\right)\left(1\wedge\frac{(1-x)(1-y)}{t}\right)\frac{1}{\sqrt{t}}\exp\left(-\frac{(x-y)^2}{2t}\right)\/,
}
whenever $x,y\in(0,1)$ and $t<1$. Using the scaling property and the shift-invariance of one-dimensional Brownian motion, we arrive at
\formula{
   p_{(a,b)}^{(-1/2)}(t,x,y) = \frac{1}{b-a}p_{(0,1)}^{(-1/2)}\left(\frac{t}{(b-a)^2},\frac{x-a}{b-a},\frac{y-a}{b-a}\right)\/,
}
for given $0\leq a<b$. It leads to
\formula[eq:-12:ab:estimate]{
p_{(a,b)}^{(-1/2)}(t,x,y) \approx\left(1\wedge\frac{(x-a)(y-a)}{t}\right)\left(1\wedge\frac{(b-x)(b-y)}{t}\right)\frac{1}{\sqrt{t}}\exp\left(-\frac{(x-y)^2}{2t}\right).
}
\section{Proof of Theorem \ref{thm:main}}
\label{sec:Proof}
The proof of Theorem \ref{thm:main} is divided into two parts, the first one relates to the lower bounds (Proposition \ref{prop:below}) and the other is devoted to show the upper bounds (Proposition \ref{prop:above}). Moreover, due to the symmetry, we will generally assume in the proofs in this section that $y>x$. 
\begin{proposition}
\label{prop:below}
For every $\nu>-1$ there exists $t_0=t_0(\nu)>0$ and constant $C_1^{(\nu)}>0$ such that
\formula[eq:estimate:below]{
   p_1^{(\nu)}(t,x,y)\geq C_1^{(\nu)} \left(1\wedge \frac{(1-x)(1-y)}{t}\right) \frac{1}{(xy+t)^{\nu+1/2}}\frac{1}{\sqrt{t}}\exp\left(-\frac{(x-y)^2}{2t}\right)\/,
}
whenever $x,y\in(0,1)$ and $t\leq t_0$.
\end{proposition}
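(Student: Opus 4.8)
The plan is to run everything off the Hunt formula $p_1^{(\nu)}=p^{(\nu)}-r_1^{(\nu)}$, supplemented by two further ingredients that are already available: the absolute continuity \eqref{ac:formula} relating the $\nu$-Bessel process to the Brownian one (index $-1/2$), whose killed density is controlled sharply by \eqref{eq:-12:ab:estimate}, and the Chapman--Kolmogorov inequality for the killed semigroup, $p_1^{(\nu)}(t,x,y)\ge\int_A p_1^{(\nu)}(t_1,x,z)\,p_1^{(\nu)}(t_2,z,y)\,m^{(\nu)}(dz)$ with $t_1+t_2=t$. Assuming $x\le y$ and fixing a small $\theta>0$, I would split into three regimes: (a) $x\ge\theta\sqrt t$, any $y\in[x,1)$; (b) $x<\theta\sqrt t$ and $1-y\ge C\sqrt t$; (c) $x<\theta\sqrt t$ and $1-y<C\sqrt t$.

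In regime (a) I kill the process additionally at $a=x/2$, so $p_1^{(\nu)}(t,x,y)\ge p^{(\nu)}_{(a,1)}(t,x,y)$, and rewrite the right-hand side through \eqref{ac:formula} in terms of the Brownian process on $(a,1)$. On the event that the path stays in $(a,1)$ one has $\int_0^t R^{-2}(s)\,ds\in[t,\,4t/x^2]\subseteq[t,\,4/\theta^2]$, so the Radon--Nikodym density is bounded below by a constant depending only on $\nu$ and $\theta$; since moreover $(xy)^{-\nu-1/2}$ and $(xy+t)^{-\nu-1/2}$ are comparable (because $t\le x^2/\theta^2\le xy/\theta^2$) and the auxiliary factor $1\wedge\frac{(x-a)(y-a)}{t}\ge1\wedge\frac{x^2/4}{t}\ge1\wedge\frac{\theta^2}{4}$ coming from \eqref{eq:-12:ab:estimate} is bounded below, this already gives \eqref{eq:estimate:below}, with the factor $1\wedge\frac{(1-x)(1-y)}{t}$ produced verbatim by the Brownian estimate. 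In particular this disposes of all points near the killing end.

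In regime (b), $x$ is near the reflecting end and $y$ stays away from the killing end; there the target is comparable to $p^{(\nu)}(t,x,y)$ and it is enough to prove $r_1^{(\nu)}(t,x,y)\le\frac12p^{(\nu)}(t,x,y)$. I would do this by inserting into $r_1^{(\nu)}(t,x,y)=\int_0^t p^{(\nu)}(t-s,1,y)\,q_{x,1}^{(\nu)}(ds)$ an upper bound for the hitting-time density of the form $q_{x,1}^{(\nu)}(ds)\le C(\nu)\,s^{-\beta(\nu)}e^{-(1-x)^2/2s}\,ds$ for small $s$; together with \eqref{eq:tdf:estimate} for $p^{(\nu)}(t-s,1,y)$ and the elementary inequality $\frac{(1-y)^2}{2(t-s)}+\frac{(1-x)^2}{2s}\ge\frac{(2-x-y)^2}{2t}=\frac{(x-y)^2}{2t}+\frac{2(1-x)(1-y)}{t}$, this bounds $r_1^{(\nu)}(t,x,y)$ by $p^{(\nu)}(t,x,y)$ times a power of $t$ times $e^{-2(1-x)(1-y)/t}$, which tends to $0$ as $t\to0$ since $(1-x)(1-y)\ge c\sqrt t$ here. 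The corner, regime (c), is then reduced to (a) and (b) by Chapman--Kolmogorov through the midpoint: with $t_1=t_2=t/2$ and $z$ ranging over a $\sqrt t$-neighbourhood of $z^*=(x+y)/2\in(\tfrac14,\tfrac34)$, the first leg $p_1^{(\nu)}(t/2,x,z)$ is covered by (a) or (b) (as $z$ is bounded away from $1$), the second leg $p_1^{(\nu)}(t/2,z,y)$ by (a) (as $z\ge\theta\sqrt{t/2}$), the two Gaussian exponents add up exactly to $\frac{(x-y)^2}{2t}$, the prefactors recombine into $(xy+t)^{-\nu-1/2}t^{-1/2}$, and the factor $1\wedge\frac{(1-z)(1-y)}{t/2}\approx1\wedge\frac{1-y}{t}$ coming from the second leg is the one required.

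The step I expect to be the main obstacle is the hitting-time estimate used in regime (b) (and in the first leg of (c)); this is where the modified Bessel functions enter essentially. The Laplace transform of $T_1$ under $\pr_x^{(\nu)}$ is the explicit ratio $x^{-\nu}I_\nu(x\sqrt{2\lambda})/I_\nu(\sqrt{2\lambda})$, and extracting from it the small-$s$ behaviour of $q_{x,1}^{(\nu)}(ds)$ is precisely what the ratio bounds \eqref{eq:Laforgia} and Lemma~\ref{lem:ratio:estimates}, combined with the asymptotics \eqref{eq:I:asymp:zero}--\eqref{eq:I:asymp:infty}, are designed to give. When $y$ is itself near the reflecting end one can avoid the sharp density estimate and use instead the crude bound $\pr_x^{(\nu)}(T_1\le t)\le e^{-c(\nu)(1-x)^2/t}$, obtained from an exponential martingale inequality applied to $R^2$, so the modified Bessel input is genuinely needed only for the range of moderate $y$. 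Once this estimate is in hand, the remaining work is the bookkeeping indicated above together with the (standard but slightly delicate) verification that the Chapman--Kolmogorov integrals in regime (c) do not lose the sharp Gaussian exponent.
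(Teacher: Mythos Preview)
Your overall architecture---absolute continuity to the Brownian case when away from the origin, the Hunt formula when away from the killing boundary, and Chapman--Kolmogorov to bridge the two---is exactly the paper's. The substantive difference is in the decomposition: the paper splits according to \emph{fixed} thresholds ($x,y\ge 1/32$; $x,y\le 1/4$; $x\le 1/8$ and $y\ge 1/4$), whereas you split according to $\sqrt t$-scaled thresholds.

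This choice makes your regime~(b) considerably harder than the paper's corresponding case. In the paper's case $x,y\le 1/4$ one has $|x-y|<1/4$ while $1-y\ge 3/4$, so the crude bound $\int_0^t q_{x,1}^{(\nu)}(ds)\le 1$ together with the monotonicity of $u\mapsto u^{-\alpha}e^{-c/u}$ for small $u$ is already enough to force $r_1^{(\nu)}/p^{(\nu)}\le 1/2$; no density estimate for the hitting time is needed at all. Under your split, regime~(b) allows $y$ up to $1-C\sqrt t$, so you genuinely need the Gaussian upper bound on $q_{x,1}^{(\nu)}(ds)$. That estimate is not proved in the paper, and your suggestion that it follows from \eqref{eq:Laforgia} and Lemma~\ref{lem:ratio:estimates} via the Laplace transform is not right: those inequalities bound $I_\nu$ at the larger argument in terms of the smaller one, which yields a \emph{lower} bound on the Laplace transform $x^{-\nu}I_\nu(x\sqrt{2\lambda})/I_\nu(\sqrt{2\lambda})$, not an upper bound from which a density estimate could be read off. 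In the paper these ratio bounds are used only in the \emph{upper}-bound proposition (to control $I_\nu((2-y)/t)/I_\nu(y/t)$ in a reflection-type argument) and play no role whatsoever in the proof of Proposition~\ref{prop:below}.

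So the step you flag as the main obstacle is a real one for your decomposition, and the paper sidesteps it entirely by the choice of fixed thresholds. If you insist on the $\sqrt t$-scaled split you would need to import a first-passage density bound for the Bessel process from outside the paper; otherwise, switching to fixed thresholds $x,y\le 1/4$ in the Hunt step and letting Chapman--Kolmogorov handle the mixed range $x\le 1/8$, $y\ge 1/4$ (with the intermediate point $z$ taken in a fixed subinterval such as $[1/32,1/4]$ rather than a $\sqrt t$-window around $(x+y)/2$) makes the whole argument go through with only the trivial bound $\int_0^t q_{x,1}^{(\nu)}(ds)\le 1$.
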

\begin{proof}
   We generally assume that $t<1$ and we begin with the case when the space arguments are bounded away from $0$, i.e. $x,y\geq 1/32$. The constant $1/32$ is chosen for technical reasons. Since obviously $T_{(x/4,1)}\leq T_{1}$ we can write for any Borel set $A\subset (0,1)$ that
	\formula{
	  \int_A p_1^{(\nu)}(t,x,y)m^{(\nu)}(dy) = \ex^{(\nu)}_x [t<T_{1}; R(t)\in A]\geq \ex^{(\nu)}_x [t<T_{(x/4,1)}; R(t)\in A]\/.
	}
	Now, applying the absolute continuity property \eqref{ac:formula} we can see that the last expression is equal to 
	\formula[eq:acinproof]{
	  \ex^{(-1/2)}_x\left[t<T_{(x/4,1)}, R(t)\in A; \left(\frac{R(t)}{x}\right)^{\nu+1/2}\exp\left(-\frac{\nu^2-1/4}{t}\int_0^t \frac{ds}{R(s)^2}\right)\right]\/.
	}
	Notice that we can estimate the above-given integral functional on $\{t<T_{(x/4,1)}\}$ in the following way
	\formula{
	   \int_0^{t}\frac{ds}{R(s)^2} \leq \frac{16}{x^2} \int_0^t ds \leq 16\cdot 32^2 t \leq 16\cdot 32^2\/,
	}
	whenever $t<1$ and $x>1/32$ as we have assumed. Thus we have
	\formula{
	   \int_A p_1^{(\nu)}(t,x,y)m^{(\nu)}(dy)\geq c_0 \int_A \left(\frac{y}{x}\right)^{\nu+1/2}p_{(x/4,1)}^{(-1/2)}(t,x,y)\,dy
	}
	and consequently, by \eqref{eq:-12:ab:estimate}, we obtain
	\formula{
	   p_1^{(\nu)}(t,x,y)\geq \frac{c_1}{(xy)^{\nu+1/2}} \left(1\wedge\frac{3x(y-x/4)}{4t}\right)\left(1\wedge\frac{(1-x)(1-y)}{t}\right)\frac{1}{\sqrt{t}}\exp\left(-\frac{|x-y|^2}{2t}\right)\/,
	}
	for every $t<1$ and $x,y>1/32$. Observe also that since $y>x\geq 1/32$ and $t$ is bounded we have
	\formula{
	   1\wedge \frac{3x(y-x/4)}{4t}\approx 1 \approx \frac{1}{(t+xy)^{\nu+1/2}}\/,
	}
	which ends the proof in this case. 
	
	Now we assume that $x$ and $y$ are bounded away from $1$, i.e. $x,y\leq 1/4$. Recall that the subtrahend in the Hunt formula \eqref{eq:Hunt} is given by
	\formula{
	   r^{(\nu)}_1(t,x,y) &= \int_0^t p^{(\nu)}(t-s,1,y)\,q_{x,1}^{(\nu)}(ds)\/.
	}
	Using \eqref{eq:tdf:estimate} we obtain that for $\nu>-1/2$ we have
	\formula{
	  r^{(\nu)}_1(t,x,y) &\stackrel{\nu}{\approx} \int_0^t \frac{1}{(t-s+y)^{\nu+1/2}\sqrt{t-s}}\exp\left(-\frac{(1-y)^2}{2(t-s)}\right)\,q_{x,1}^{(\nu)}(ds)\\
		&\leq \int_0^t\frac{1}{(t-s)^{\nu+1}}\exp\left(-\frac{9}{32(t-s)}\right)\,q_{x,1}^{(\nu)}(ds)\/.
	}
	The function $t^{-\nu-1}\exp(-9/(32t))$ is increasing on $(0,t_1]$, where $t_1=9/(32(\nu+1))$. Thus, for $t<t_1$ we have
	\formula{
	   r_1^{(\nu)}(t,x,y)\leq \frac{1}{t^{\nu+1}}\exp\left(-\frac{9}{32t}\right)\int_0^t q_{x,1}^{(\nu)}(ds)\/.
		}
	Estimating the last integral simply by $1$ and using \eqref{eq:tdf:estimate} together with the fact that under our assumptions on $x$ and $y$ we have $|x-y|<1/4$, we can write
		\formula{
		   \frac{r^{(\nu)}(t,x,y)}{p^{(\nu)}(t,x,y)}\leq c_2(1+xy/t)^{\nu+1/2}\exp\left(-\frac{9}{32t}+\frac{|x-y|^2}{2t}\right)\leq c_3\frac{1}{t^{\nu+1/2}}\exp\left(-\frac{8}{32t}\right)\/.
		}
		Notice that for $\nu\in(-1,-1/2]$ and $t<9/16$ in a similar way we can arrive at		
		\formula{
		   r_1^{(\nu)}(t,x,y)&\stackrel{\nu}{\approx} \int_0^t \frac{(t-s+y)^{-\nu-1/2}}{\sqrt{t-s}}\exp\left(-\frac{(1-y)^2}{2(t-s)}\right)\,q_{x,1}^{(\nu)}(ds)\\
			&\leq (13/16)^{-\nu-1/2}\frac{1}{\sqrt{t}}\exp\left(-\frac{9}{32t}\right)
		}
		and consequently
		\formula{
		   \frac{r_1^{(\nu)}(t,x,y)}{p^{(\nu)}(t,x,y)}\leq c_4 (t+xy)^{\nu+1/2}\exp\left(-\frac{9}{32t}+\frac{|x-y|^2}{2t}\right)\leq c_4t^{\nu+1/2}\exp\left(-\frac{8}{32t}\right)\/.
		}
		Thus, in both cases ($\nu$ greater or smaller than $-1/2$), for $t$ sufficiently small we have
		\formula{
		   p_1^{\nu}(t,x,y) &= p^{(\nu)}(t,x,y)\left(1-\frac{r^{(\nu)}(t,x,y)}{p^{(\nu)}(t,x,y)}\right)\geq  \frac12 p^{(\nu)}(t,x,y)\/,\quad x,y<1/4\/.
		}
		Since $(1\wedge (1-x)(1-y)/t)\approx 1$ in this case, the usage of \eqref{eq:tdf:estimate} gives the result.
		
		Finally, we take $x\leq 1/8$ and $y\geq 1/4$. By the Chapmann-Kolmogorov equation we can write
		\formula{
		   p_1^{(\nu)}(t,x,y) &> \int_{1/32}^{1/4} p_1^{(\nu)}(t/8,x,z)p_1^{(\nu)}(7t/8,z,y)\,m^{(\nu)}(dz)\/.
		}
		Notice that we can use the previously obtained estimates since $x,z<1/4$ and $z,y>1/32$ in the integral above. Consequently, 
		\formula{
		  p_1^{(\nu)}(t,x,y) &> \frac{c_6}{t} \exp\left(-\frac{(x-y)^2}{2t}\right)K(t,x,y)\/,
		}
		where 
		\formula{
		   K(t,x,y) = \int_{1/32}^{1/4} \frac{1}{(t+xz)^{\nu+1/2}}\left(1\wedge\frac{(1-z)(1-y)}{t}\right)\exp\left(-\frac{(7x+y-8z)^2}{14t}\right)\,m^{(\nu)}(dz)\/.
		}
		Note that for $1/32\leq z\leq 1/4$, $x\leq 1/8$ and $y>1/4$ we have 
		\formula{
		   t+xz \approx t+xy\/,\quad \textrm{and} \quad (1-z)(1-y)/t\approx (1-x)(1-y)/t\/.
		}
		Moreover, since $z\geq 1/32$ we have
		\formula{
		   \int_{1/32}^{1/4}\exp\left(-\frac{(7x+y-8z)^2}{14t}\right)\,m^{(\nu)}(dz)&\stackrel{\nu}{\approx}  \int_{1/32}^{1/4}\exp\left(-\frac{(7x+y-8z)^2}{14t}\right)\,dz\\
			& = \sqrt{t}\int_{\frac{1-4(7x+y)}{32\sqrt{t}}}^{\frac{2-7x-y}{8\sqrt{t}}} \exp\left(-\frac{64w^2}{14}\right)\,dw\/,
		}
		where the last equality is obtained by substituting $8z-y-7x = 8w\sqrt{t}$. Finally note that $1/4\leq 7x+y\leq 15/8$ and consequently the lower bound of integration is non-positive and for $t\leq 1$ the upper bound is greater than $1/64$, which implies that the last integral given above can be estimated from below by a constant. Thus, combining all together we obtain that
		\formula{
		  K(t,x,y)\leq  \frac{c_7\,\sqrt{t}}{(t+xy)^{\nu+1/2}}\left(1\wedge\frac{(1-x)(1-y)}{t}\right)\/, 
		}
		which ends the proof.
	\end{proof}
\begin{proposition}
\label{prop:above}
For every $\nu>-1$ we can find $C_2^{(\nu)}>0$ and $t_0=t_0(\nu)>0$ such that
\formula{
   p_1^{(\nu)}(t,x,y)\leq C_2^{(\nu)} \left(1\wedge \frac{(1-x)(1-y)}{t}\right) \frac{1}{(xy+t)^{\nu+1/2}}\frac{1}{\sqrt{t}}\exp\left(-\frac{(x-y)^2}{2t}\right)\/,
}
for every $x,y\in(0,1)$ and $t\leq t_0$.
\end{proposition}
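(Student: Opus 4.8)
The plan is to parallel the case analysis of Proposition \ref{prop:below}. By symmetry we may assume $y>x$, and throughout we use the trivial bound $p_1^{(\nu)}(t,x,y)\le p^{(\nu)}(t,x,y)$ together with \eqref{eq:tdf:estimate}, which already settles every regime in which $1\wedge\frac{(1-x)(1-y)}{t}\approx_\nu 1$. Since $(1-y)^2\le(1-x)(1-y)$, the only work lies in the range $(1-x)(1-y)<t$; there $1-y<\sqrt t$ (so $y$ is within $\sqrt t$ of the killing point $1$), $1\wedge\frac{(1-x)(1-y)}{t}=\frac{(1-x)(1-y)}{t}$, and one must produce this decay of the killed kernel near $1$. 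By a further trivial splitting we may also assume $\frac{(1-x)(1-y)}{t}<\eps_0$ for a fixed small $\eps_0=\eps_0(\nu)$, and we treat $x\le 1/2$ and $x>1/2$ separately.

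Case $x\le 1/2$ (hence $1-y<2t$). Here I would use the Hunt formula \eqref{eq:Hunt} written with respect to the variable close to $1$:
\formula{
 p_1^{(\nu)}(t,x,y)=p^{(\nu)}(t,x,y)-\int_0^t p^{(\nu)}(t-s,1,x)\,q_{y,1}^{(\nu)}(ds)\/,
}
and show that the subtrahend is at least $\big(1-C_\nu\frac{(1-x)(1-y)}{t}\big)p^{(\nu)}(t,x,y)$, i.e. $p_1^{(\nu)}(t,x,y)\le C_\nu\frac{(1-x)(1-y)}{t}\,p^{(\nu)}(t,x,y)$. This is the Bessel analogue of the identity, valid for $\nu=-1/2$, that $p^{(\nu)}(t,x,y)-r_1^{(\nu)}(t,x,y)=\frac{1}{\sqrt{2\pi t}}e^{-(x-y)^2/2t}\big(1-e^{-2(1-x)(1-y)/t}\big)$, which follows from \eqref{eq:-12:formula} by pairing each source term with its Dirichlet image through $1$. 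For general $\nu$ one exploits that, $y$ being within $2t$ of $1$, one has $\pr^{(\nu)}_y(T_1>s)=O_\nu\big((1-y)/\sqrt s\big)$ for all $s\le t$ while the bulk of $q_{y,1}^{(\nu)}$ sits on times $s=O\big((1-y)^2\big)$, on which $p^{(\nu)}(t-s,1,x)$ may be replaced by $p^{(\nu)}(t,1,x)$ up to a factor $1+O\big((1-y)^2/t^2\big)$; one also uses the comparison $p^{(\nu)}(t,x,1)/p^{(\nu)}(t,x,y)=1+O_\nu\big(\frac{(1-x)(1-y)}{t}\big)$. Since $1-x\ge 1/2$ and $1-y<2t$, the quantities $\frac{1-y}{\sqrt t}$, $\frac{(1-y)^2}{t^2}$ and $1-y$ are all $O_\nu\big(\frac{(1-x)(1-y)}{t}\big)$, so these contributions combine to the asserted estimate. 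Every comparison of $p^{(\nu)}$ at nearby arguments reduces via \eqref{eq:tdf:formula} to a quotient of modified Bessel functions, controlled for large arguments by \eqref{eq:Laforgia} and Lemma \ref{lem:ratio:estimates} and for bounded arguments by \eqref{eq:I:asymp:zero}; one also needs two-sided control of the density of $q_{y,1}^{(\nu)}$ near time $0$, which is likewise a statement about Bessel functions. The reflecting condition at $0$ when $\nu<0$ causes no trouble here, as $x$ and the path reached from $1$ stay away from $0$ on the relevant event.

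Case $x>1/2$ (so $x,y\in(1/2,1)$, bounded away from $0$). I would isolate excursions toward the origin by decomposing at the first passage to the level $x/2$, chosen, as in Proposition \ref{prop:below}, to scale with $x$:
\formula{
 p_1^{(\nu)}(t,x,y)=p_{(x/2,1)}^{(\nu)}(t,x,y)+\ex_x^{(\nu)}\!\big[T_{x/2}<t\wedge T_1;\,p_1^{(\nu)}(t-T_{x/2},x/2,y)\big]\/.
}
For the first term the absolute continuity formula \eqref{ac:formula} applies on $\{t<T_{(x/2,1)}\}$, where $\int_0^t ds/R(s)^2\le 16$ and $R(t)/x\in(1,2)$, so all Radon--Nikodym factors are comparable to $\nu$-dependent constants; the sharp Brownian estimate \eqref{eq:-12:ab:estimate} on $(x/2,1)$ then gives the required bound, its factor $1\wedge\frac{(x-x/2)(y-x/2)}{t}$ being comparable to $1$ and its factor $1\wedge\frac{(1-x)(1-y)}{t}$ supplying exactly what we need. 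For the correction term I would invoke the case $x\le 1/2$ (with first argument $x/2\le 1/2$) to get $p_1^{(\nu)}(u,x/2,y)\le C_\nu\,\frac{1-y}{u^{3/2}}\exp\!\big(-\frac{(y-x/2)^2}{2u}\big)$, use the monotonicity of $u\mapsto u^{-3/2}e^{-(y-x/2)^2/2u}$ on $\big(0,(y-x/2)^2/3\big)$ to replace $u$ by $t$, and bound $\pr_x^{(\nu)}(T_{x/2}<T_1)\le C_\nu(1-x)$ via the scale function; since $(y-x/2)^2-(x-y)^2=x\big(y-\frac{3}{4}x\big)\ge x^2/4\ge 1/16$, the resulting bound $C_\nu\,\frac{(1-x)(1-y)}{t^{3/2}}e^{-(y-x/2)^2/2t}$ carries an excess factor $e^{-1/(32t)}$ over the target and is therefore dominated by it.

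The main obstacle is the case $x\le 1/2$: one needs the cancellation in the Hunt formula to be quantitatively sharp, yielding an error of exactly order $\frac{(1-x)(1-y)}{t}\,p^{(\nu)}(t,x,y)$ with the Gaussian factor $e^{-(x-y)^2/2t}$ preserved. This is immediate for $\nu=-1/2$ by the method of images, but for general $\nu$ it forces precise two-sided control of the hitting density $q_{y,1}^{(\nu)}$ near $0$ and of modified Bessel quotients for large arguments, and it is here that the analytic preliminaries of Section \ref{sec:preliminaries}, Lemma \ref{lem:ratio:estimates} in particular, are indispensable.
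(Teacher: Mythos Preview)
Your overall architecture matches the paper's: handle the trivial regime by $p_1^{(\nu)}\le p^{(\nu)}$, then for $(1-x)(1-y)<t$ split into $x\le 1/2$ and $x>1/2$, and in the second case compare to the Brownian interval kernel via absolute continuity. Your treatment of $x>1/2$ is a legitimate variant: instead of the paper's bootstrap (showing the correction term is $\le c\,e^{-1/(64t)}$ times the full quantity and absorbing it), you bound it directly by the scale-function estimate $\pr_x^{(\nu)}(T_{x/2}<T_1)\le C_\nu(1-x)$ together with the already–proved $x\le 1/2$ bound and the excess Gaussian factor $e^{-1/(32t)}$. Both work; yours is arguably more transparent once the scale function is in hand.

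The real divergence is in the case $x\le 1/2$, which you yourself flag as the obstacle. You propose a \emph{lower} bound on the Hunt subtrahend $\int_0^t p^{(\nu)}(t-s,1,x)\,q_{y,1}^{(\nu)}(ds)$ via sharp two-sided control of the hitting distribution $q_{y,1}^{(\nu)}$ and a Taylor comparison $p^{(\nu)}(t-s,1,x)\approx p^{(\nu)}(t,1,x)$. This can be made to work, but it needs genuine input on $q_{y,1}^{(\nu)}$ that the paper never develops, and the ``bulk at $s=O((1-y)^2)$'' heuristic alone is not enough: to make the leftover mass $\pr_y^{(\nu)}(T_1>\delta)$ of order $(1-y)/t$ one must take $\delta$ of order $t^2$, and on that range the Gaussian exponent of $p^{(\nu)}(t-s,1,x)$ already moves by a quantity of the same order, so the two errors must be balanced carefully. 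The paper sidesteps all of this with a spatial reflection trick: write
\[
\ex_x^{(\nu)}[t<T_1;R(t)\in A]=\underbrace{\ex_x^{(\nu)}[R(t)\in A]-\ex_x^{(\nu)}[R(t)\in 2-A]}_{K_1}+\underbrace{\ex_x^{(\nu)}[R(t)\in 2-A]-\ex_x^{(\nu)}[t\ge T_1;R(t)\in A]}_{K_2},
\]
bound $K_1$ by the pointwise inequality $1-\big(\tfrac{2-y}{y}\big)^{\nu+1}\tfrac{I_\nu(x(2-y)/t)}{I_\nu(xy/t)}e^{-2(1-y)/t}\le c\,(1-y)/t$ (monotonicity only), and bound $K_2$ via the Strong Markov property at $T_1$ together with $\big(\tfrac{2-y}{y}\big)^{\nu+1}\tfrac{I_\nu((2-y)/t)}{I_\nu(y/t)}e^{-2(1-y)/t}-1\le c\,(1-y)$, which is exactly where Lemma~\ref{lem:ratio:estimates} enters. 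No knowledge of $q_{y,1}^{(\nu)}$ is needed; the single $I_\nu$-ratio estimate does all the work. So your plan is not wrong, but the paper's device is both simpler and self-contained with the preliminaries provided.
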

\begin{proof}
   The Hunt formula \eqref{eq:Hunt} together with \eqref{eq:tdf:estimate} immediately imply that 
	\formula{
	   p_1^{(\nu)}(t,x,y)\leq p^{(\nu)}(t,x,y)\stackrel{\nu}{\approx}  \frac{1}{(xy+t)^{\nu+1/2}}\frac{1}{\sqrt{t}}\exp\left(-\frac{(x-y)^2}{2t}\right)\/,
	}
	which gives the result in the case $(1-x)(1-y)\geq t$. Thus, from now on we will assume that $(1-x)(1-y)\leq t\leq 1/4$. Notice that it implies that $1-y\leq \sqrt{t}\leq 1/2$ and consequently $y\geq 1/2$. We split the proof into to parts. First, we assume that $x\leq 1/2$. The proof in this case is based on an idea to mimic the refection principle, which is true for Brownian motion and obviously is not for Bessel processes. However, it leads to correct estimates. More precisely, for every Borel set $A\subset (0,1)$ we write $\ex^{(\nu)}_x[t<T_{1};R(t)\in A]$ as
	 \formula{
	\left(\ex^{(\nu)}_x[R(t)\in A]-\ex^{(\nu)}_x[R(t)\in 2-A]\right)+ \left(\ex^{(\nu)}_x[R(t)\in 2-A]-\ex^{(\nu)}_x[t\geq T_{1};R(t)\in A]\right)
	}
	and denote the first part as $K_1(x,t,A)$ and the other as $K_2(x,t,A)$. Note that in the classical Brownian motion case, the second part vanishes and the reflection principle just gives the formula for the transition density function of the process killed at $1$. Here we obviously have to deal with both parts and consequently, we begin with the following estimates for $y\geq 1/2$ and $t<1/4$
	\formula[eq:proof:estimate1]{
	\nonumber
	   1-\frac{p^{(\nu)}(t,x,2-y)}{p^{(\nu)}(t,x,y)}\left(\frac{2-y}{y}\right)^{2\nu+1} &= 1-\left(\frac{2-y}{y}\right)^{\nu+1}\frac{I_\nu(x(2-y)/t)}{I_\nu(xy/t)}\exp\left(-\frac{2(1-y)}{t}\right)\\
		&\leq 1-\exp\left(-\frac{2(1-y)}{t}\right)\leq c_1\frac{1-y}{t}\/.
	}
	Here we have just simply used the monotonicity of $z^{\nu+1}$ and $I_\nu(z)$. Thus, we have 
	\formula{
	  K_1(x,t,A) = &\int_A\left(p^{(\nu)}(t,x,y)-p^{(\nu)}(t,x,2-y)\left(\frac{2-y}{y}\right)^{2\nu+1}\right)m^{(\nu)}(dy)\\
		&\leq c_1\frac{1-y}{t}\int_A p^{(\nu)}(t,x,y)m^{(\nu)}(dy)\/.
	}
	Moreover, for $y\geq 1/2$ and $t<1/4$ we can write
	\formula{
	   \left(\frac{2-y}{y}\right)^{2\nu+1}\frac{p^{(\nu)}(t,1,2-y)}{p^{(\nu)}(t,1,y)}-1 = \left(\frac{2-y}{y}\right)^{\nu+1}\frac{I_\nu((2-y)/t)}{I_\nu(y/t)}\exp\left(-\frac{2(1-y)}{t}\right)-1\/.
	}
	Now applying \eqref{eq:II:inequality} (note that $(2-y)/t>y/t>2$) we arrive at
	\formula[eq:proof:estimate2]{
	     \left(\frac{2-y}{y}\right)^{2\nu+1}\frac{p^{(\nu)}(t,1,2-y)}{p^{(\nu)}(t,1,y)}-1\leq \left(\frac{2-y}{y}\right)^{2\nu+1} -1\leq c_2(1-y)\/.
	}
   Since $A\subset(0,1)$, the condition $R(t)\in 2-A$ implies that $t\geq T_1$ and consequently
	\formula{
	  K_2(x,t,A) &=\ex_x^{(\nu)}[t\geq T_1;R(t)\in 2-A]-\ex_x^{(\nu)}[t\geq T_1;R(t)\in A]\/.
	}
	The Strong Markov property gives
	\formula{
	   K_2(x,t,A) &= \ex_x^{(\nu)}[t\geq T_1;H_1(t-T_1,A)] \/,
	}
	where
	\formula{
	 H_1(s,A) &= \ex_1^{(\nu)}\left[R(s)\in 2-A\right]-\ex_1^{(\nu)}\left[R(s)\in A\right]\\
	& = \int_{2-A}  p^{(\nu)}(s,1,y)m^{(\nu)}(dy)-\int_{A}  p^{(\nu)}(s,1,y)m^{(\nu)}(dy)\\
	&= \int_A\left(\left(\frac{2-y}{y}\right)^{2\nu+1}{p^{(\nu)}(t,1,2-y)}-{p^{(\nu)}(t,1,y)}\right)m^{(\nu)}(dy)\/.
	}
	The estimate provided in \eqref{eq:proof:estimate2} enable us to write
	\formula{
	   H_1(s,A)\leq c_2\int_A (1-y)p^{(\nu)}(s,1,y)m^{(\nu)}(dy)  = c_2\ex_1^{(\nu)}[R(s)\in A;1-R(s)]\/.
	}
	Now we can apply the Strong Markov property again to come back to
	\formula{
	   K_2(x,t,A)\leq c_2\int_A (1-y)p^{(\nu)}(t,x,y)m^{(\nu)}(dy)\leq c_2\int_A \frac{1-y}{t} p^{(\nu)}(t,x,y)m^{(\nu)}(dy)\/.
	}
	Combining all together leads to
	\formula[eq:upper:xsmall]{
	   p_1^{(\nu)}(t,x,y)\leq c_3\frac{1-y}{t}p^{(\nu)}(t,x,y)\/,\quad x\leq 1/2\/,\quad y\geq 1/2\/,\quad t\leq 1/4\/,
	}
	which ends the proof in this case.
	
	Now we assume that $x\in(1/2,1)$ and write for every Borel set $A\subset(0,1)$
	\formula[eq:midstep3]{
	   \ex_x^{(\nu)}[t<T_{1};R(t)\in A] = \ex_x^{(\nu)}[t<T_{(x/4,1)};R(t)\in A]+\ex_x^{(\nu)}[T_{(x/4,1)}\leq t<T_{1};R(t)\in A]\/.
	}
	Intuitively, the first part should be larger then the other one, since for $x,y\geq 1/2$ we should expect that there are more trajectories going from $x$ to $y$ which do not go below $x/4$, then those visiting level $x/4$ before reaching $y$. Indeed, note that the first term can be estimated in the same way as in the proof of Proposition \ref{prop:below}, i.e. we can apply the absolute continuity property, which together with the boundedness of the integral functional (for $x>1/2$ and $t\leq 1$) appearing in \eqref{eq:acinproof} imply
		\formula[eq:midstep2]{
		   \ex_x^{(\nu)}[t<T_{(x/4,1)};R(t)\in A] \leq c_4 \int_A \left(\frac{y}{x}\right)^{\nu+1/2}p_1^{(-1/2)}(t,x,y)\,dy\/.
		}
		Now it is enough to show that the second term in the right-hand side of \eqref{eq:midstep3} is significantly smaller then the expression on the left-hand side. To see this, we apply the Strong Markov property to write it as
		\formula{
		  \ex_x^{(\nu)}\left[T_{(x/4,1)}\leq t<T_{1};H_2(t-T_{(x/4,1)},x/4,A)\right]\/,
		}
		where 
		\formula{
		   H_2(u,x/4,A) = \ex_{R(T_{(x/4,1)})}^{(\nu)}[u<T_1;R(u)\in A] =  \ex_{x/4}^{(\nu)}[u<T_1;R(u)\in A]\/.
		  }
		The last equality follows from the fact that $R(T_{(x/4,1)})=x/4$ on $\{T_{(x/4,1)}\leq t<T_{1}\}$. Since $x/4<1/2$ we can apply the result given in \eqref{eq:upper:xsmall} to estimate $H_2(u,x/4,A)$ in the following way
		\formula{
		  \ex_{x/4}^{(\nu)}[u<T_1;R(u)\in A] \leq c_3 \int_A \left(1\wedge \frac{1-y}{u}\right)\frac{1}{\sqrt{u}}\exp\left(-(x/4-y)^2/(2u)\right) m^{(\nu)}(dy)
		}
		and let us denote the right-hand side above as $H_3(u,x/4,A)$. Notice that the function $u \rightarrow u^{-\alpha}\exp(-b^2/(2u))$ is increasing for $u<b^2/(2\alpha)$ and we have $|x/4-y|>1/4$, thus $u\to H_3(u,x/4,A)$ is increasing for $u\leq 1/48$. Moreover, we have
		\formula{
		   (y-x/4)^2-(y-x/2)^2\geq \frac{x}{4}(2y-3x/4)\geq \frac{x}{16}\geq \frac{1}{32}\/,\quad x,y\geq 1/2\/,
		}
		and consequently, for $u\leq t$ we obtain
		\formula{
		   H_3(u,x/4,A) \leq \exp\left(-\frac{1}{64 t}\right)H_3(u,x/2,A)\/.
		}
		Collecting all together with the fact that $T_{(x/2,1)}\leq T_{(x/4,1)}$ we can write for $t\leq 1/48$ that
		\formula{
		  \ex_x^{(\nu)}\left[T_{(x/4,1)}\right.\lefteqn{\left.\leq t<T_{1};H_2(t-T_{(x/4,1)},x/4,A)\right] }\\
			&\leq c_3 \ex_x^{(\nu)}\left[T_{(x/2,1)}\leq t<T_{1};H_3(t-T_{(x/2,1)},x/4,A)\right]\\
			&\leq c_3 \exp\left(-1/(64t)\right)\ex_x^{(\nu)}\left[T_{(x/2,1)}\leq t<T_{1};H_3(t-T_{(x/2,1)},x/2,A)\right]\\
			&\leq c_5 \exp\left(-1/(64t)\right)\ex_x^{(\nu)}\left[T_{(x/2,1)}\leq t<T_{1};H_2(t-T_{(x/2,1)},x/2,A)\right]\\
			& = c_5 \exp\left(-1/(64t)\right)\ex_x^{(\nu)}\left[T_{(x/2,1)}\leq t<T_{1};R(t)\in A\right]\/,
		}
		the last two lines follows from Proposition \ref{prop:below} and the Strong Markov property respectively. Since obviously
		\formula{
		   \ex_x^{(\nu)}\left[T_{(x/2,1)}\leq t<T_{1};R(t)\in A\right]\leq \ex_x^{(\nu)}\left[t<T_{1};R(t)\in A\right]
		}
		we can choose $t_0<1/48$ in such a way that for $t\leq t_0$ we have $c_5 \exp\left(-1/(64t)\right)\leq 1/2$ and consequently
		\formula{
		  \ex_x^{(\nu)}[t<T_{1};R(t)\in A] &= \ex_x^{(\nu)}[t<T_{(x/4,1)};R(t)\in A]+ \ex_x^{(\nu)}[T_{(x/4,1)}\leq t<T_{1};R(t)\in A] \\
			&\leq \ex_x^{(\nu)}[t<T_{(x/4,1)};R(t)\in A] +\frac{1}{2}\ex_x^{(\nu)}[t<T_{1};R(t)\in A]
		}
		and the estimates given in \eqref{eq:midstep2} ends the proof.
\end{proof}
Theorem \ref{thm:main} is now the consequence of the following. It is known that there exists $C_3^{(\nu)}>1$ such that
\formula[eq:largetime]{
   \frac{1}{C_3^{(\nu)}}(1-x)(1-y)\exp(-\lambda_{1,\nu}^2 t)\leq G_t^\nu(x,y)\leq C_3^{(\nu)}(1-x)(1-y)\exp(-\lambda_{1,\nu}^2 t)\/,
} 
for every $x,y\in (0,1)$, where the upper bounds holds for $t\geq T_0$ for arbitrary $T_0>0$, but in the lower bounds we have only the existence of such $T_0$ (see for example Theorem $3.7$ in \cite{NowakRoncal:2014a}). Thus, the upper bounds in \eqref{eq:mainresult} are just the consequence of \eqref{eq:largetime} and Proposition \ref{prop:above}. To finish the proof it is enough to show that the result of Proposition \ref{prop:below} is true for $t<T_0$ for arbitrary $T_0>0$. Let $t_0>0$ be as in the theses of Proposition \ref{prop:below} and note that \eqref{eq:estimate:below} reads as
\formula{
   p_1^{(\nu)}(t,x,y)\geq c_1(t_0,\nu) (1-x)(1-y)\/,\quad x,y\in (0,1)\/,
}
whenever $t\in [t_0/2,t_0]$. Thus, applying the Chapmann-Kolmogorov equation we have 
\formula{
  p_1^{(\nu)}(2t,x,y) &= \int_0^1 p_1^{(\nu)}(t,x,z)p_1^{(\nu)}(t,z,y)m^{(\nu)}(dz)\\
	& \geq (c_1(t_0,\nu))^2 (1-x)(1-y)\int_0^1 (1-z)^2z^{2\nu+1}\,dz\\
	&\geq c_3(t_0,\nu) \left(1\wedge \frac{(1-x)(1-y)}{t}\right) \frac{1}{(xy+t)^{\nu+1/2}}\frac{1}{\sqrt{t}}\exp\left(-\frac{(x-y)^2}{2t}\right)\/, 
}
whenever $t\in[t_0/2,t_0]$, which enable us to replace the condition $t<t_0$ in \eqref{eq:estimate:below} by $t<2t_0$ and in consequence by $t<T_0$ for arbitrary $T_0>0$. This ends the proof of Theorem \ref{thm:main}.
%\subsection*{Acknowledgments}
%The authors are very grateful to Tomasz Byczkowski for critical remarks and comments
%which enabled them to improve the presentation of the paper.

 \bibliography{bibliography}
\bibliographystyle{plain}
\end{document}